\newtheorem{thm}{Theorem}
\newtheorem*{prob}{Problem}
\newtheorem{lem}{Lemma}
\theoremstyle{definition}
\newtheorem*{defn}{Definition}
\theoremstyle{remark}
\newcommand{\pt}{\mathrm{pt}}
\DeclareMathOperator{\hind}{hind}
\DeclareMathOperator{\Ext}{Ext}
\DeclareMathOperator{\cat}{cat}
\renewcommand{\int}{\mathop{\rm int}}
\renewcommand{\epsilon}{\varepsilon}
\begin{document}

\title{Periodic billiard trajectories in smooth convex bodies}
\author{R.N. Karasev}
\email{r\_n\_karasev@mail.ru}
\address{
Roman Karasev, Dept. of Mathematics, Moscow Institute of Physics
and Technology, Institutskiy per. 9, Dolgoprudny, Russia 141700}
\thanks{This research was supported by the Russian Foundation for Basic Research grants No. 03-01-00801
and 06-01-00648, and by the President of the Russian Federation
grants No. MK-5724.2006.1 and No. MK-1005.2008.1}

\subjclass[2000]{55M30}
\keywords{billiard trajectories, Lyusternik-Schnirelmann theory}

\begin{abstract}
We consider billiard trajectories in a smooth convex body in $\mathbb R^d$ and estimate the number of distinct periodic trajectories that make exactly $p$ reflections per period at the boundary of the body. In the case of prime $p$ we obtain the lower bound $(d-2)(p-1)+2$, which is much better than the previous estimates.
\end{abstract}

\maketitle

\section{Introduction}

First, we give several definitions on billiards in convex bodies.

\begin{defn}
\emph{A billiard trajectory} in a smooth convex body $T\in\mathbb R^d$ is a polygon $P\subset T$, with all its vertices on the boundary of $T$, and at each vertex the direction of line changes according to the elastic reflection rule. 
\end{defn}

\begin{defn}
Let $P$ be a periodic billiard trajectory. Its number of vertices is called its \emph{length}.
\end{defn}

From here on $d$ will denote the dimension of convex body under consideration.
We encode periodic trajectories by the sequence of their vertices $(x_1,x_2,\ldots,x_l)$. The indices are considered $\mod l$. We do not allow coincidences $x_i=x_{i+1}$, but $x_i$ may equal $x_j$ if $i-j\not=0,\pm 1 \mod l$. Of course, the polygon segments can have mutual intersections.

The problem of finding lower bounds on the number of distinct trajectories of a given length has quite a long history. We mean by ``distinct trajectories'' the orbits of the dihedral group, acting naturally on trajectories. 

The first lower bound was obtained in~\cite{bir27} for the two-dimensional case.

In the case of length $2$ and dimension $d$ the paper~\cite{kui64} gives the lower bound $d$. This bound is tight, it may be seen by considering a generic ellipsoid.

The case of dimension $3$ was considered in~\cite{bab90}, but later an error was found, as noted in~\cite{fartab99,far00}.

In the book~\cite{kleewa} the following problem was formulated (Problem 1.7):

\begin{prob}
Let $T\subset\mathbb R^d$ be a smooth convex body. Prove that it has at least $d$ periodic billiard trajectories of length $3$.
\end{prob}

In the papers~\cite{fartab99, far00} some lower bounds were obtained for length $n$. There were two cases: odd $n$ without other assumptions, and odd prime $n$. In the latter case the bounds were better. Here we give another lower bound for prime lengths.

\begin{thm}
\label{bill-p}
Let $T\subset\mathbb R^d$ be a smooth convex body, $d\ge 3$, let $p>2$ be a prime. Then there are at least $(d-2)(p-1)+2$ distinct periodic billiard trajectories of length $p$ in $T$.
\end{thm}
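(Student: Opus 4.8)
The plan is to treat this as an equivariant Lusternik--Schnirelmann problem for the perimeter functional on the cyclic configuration space of $\partial T$. Since $T$ is a smooth convex body, $\partial T$ is a smooth hypersurface diffeomorphic to $S^{d-1}$. I put
\[
M_p=\bigl\{(x_1,\dots,x_p)\in(\partial T)^p:\ x_i\ne x_{i+1}\ \text{for all }i\bmod p\bigr\},
\]
and let $L(x_1,\dots,x_p)=\sum_{i=1}^{p}|x_i-x_{i+1}|$. Then $L$ is smooth on $M_p$, and the vanishing of the tangential component of $\partial L/\partial x_i$ for every $i$ is exactly the elastic reflection law at $x_i$, so the critical points of $L$ on $M_p$ are precisely the periodic billiard trajectories of length $p$. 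The dihedral group $D_p$ acts on $M_p$ by cyclic permutation and reversal of indices, $L$ is $D_p$-invariant, and the $D_p$-orbits of critical points are the ``distinct trajectories'' of the statement; so it suffices to produce at least $(d-2)(p-1)+2$ critical $D_p$-orbits. To control the non-compactness of $M_p$ near the diagonal, I let $c^\ast$ be the supremum of $L$ over configurations in $(\partial T)^p$ with $x_i=x_{i+1}$ for some $i$; since one can strictly raise the maximal perimeter of an inscribed polygon in a convex body by adjoining a vertex, $c^\ast<c_0:=\max_{M_p}L$. For $c\in(c^\ast,c_0]$ the superlevel set $\{L\ge c\}$ is a compact $D_p$-invariant subset of $M_p$, every critical point of $L$ of value $>c^\ast$ is automatically non-degenerate (an honest trajectory), and along $\{L>c^\ast\}$ the gradient flow of $L$ has no critical sequence collapsing onto the diagonal --- the Palais--Smale-type input needed for minimax on this region.

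\textbf{From minimax to an equivariant index.} Primality of $p$ enters here: a non-degenerate $p$-gon cannot be fixed by a non-trivial rotation, since its period would be a proper divisor of $p$, hence $1$, collapsing all vertices; so the rotation subgroup $\mathbb Z/p\subset D_p$ acts \emph{freely} on $M_p$ (the reflections do fix the palindromic polygons, which is why the rotation subgroup is the relevant gadget). By the equivariant minimax principle, the number of critical $D_p$-orbits of $L$ of value above $c^\ast$ is at least a $\mathbb Z/p$-equivariant index --- the $\mathbb Z/p$-Lusternik--Schnirelmann category, or the cohomological (Fadell--Husseini) index --- of the compact invariant set $\{L\ge c\}$ for $c$ slightly above $c^\ast$. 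Note that $M_p$ carries an obvious $\mathbb Z/p$-map $(x_i)\mapsto(x_{i+1}-x_i)$ into $(\mathbb R^d\otimes\rho)\setminus\{0\}$, where $\rho$ is the standard $(p-1)$-dimensional real representation of $\mathbb Z/p$ (the differences never all vanish on $M_p$); so the index of all of $M_p$ is at most $d(p-1)$. But only the part of the index that survives above level $c^\ast$ yields trajectories, and the gap between $d(p-1)$ and the desired $(d-2)(p-1)+2$ is precisely the equivariant topology concentrated in the low-perimeter region near the diagonal.

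\textbf{The topological core and the main obstacle.} It thus remains to show that the $\mathbb Z/p$-index of $\{L\ge c\}$, for $c$ slightly above $c^\ast$, is at least $(d-2)(p-1)+2$ --- equivalently, that the first $(d-2)(p-1)+2$ index classes of $H^\ast_{\mathbb Z/p}(M_p)$ restrict non-trivially to $\{L\ge c\}$. My plan is to compute $H^\ast_{\mathbb Z/p}(M_p)$ from the known cohomology of cyclic configuration spaces of spheres together with the restriction map onto the low-perimeter part, or else to build an explicit $\mathbb Z/p$-equivariant test family --- a map from a free linear $\mathbb Z/p$-sphere of dimension $(d-2)(p-1)+1$ into $\{L>c^\ast\}$, assembled from nearly regular inscribed $p$-gons that stay long under the relevant deformations. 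As a consistency check, at $p=2$ one has $c^\ast=0$ and $M_2\simeq_{\mathbb Z/2}S^{d-1}$ with the antipodal action (via the chord-direction map $(x_1,x_2)\mapsto(x_2-x_1)/|x_2-x_1|$), of index $d=(d-2)\cdot1+2$, which recovers the classical tight bound for length $2$. The principal difficulty is exactly this step: one must control the \emph{ring} structure of the cyclic-equivariant cohomology of configuration spaces of spheres and pin down how much of it is carried by configurations of perimeter above $c^\ast$ --- i.e.\ that the degenerate and the short configurations genuinely sit at the bottom of $L$ in the equivariant sense. Once that is settled, the remaining ingredients (reflection law $\Leftrightarrow$ criticality, compactness and Palais--Smale above $c^\ast$, and the minimax bookkeeping) are routine.
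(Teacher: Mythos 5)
Your variational set-up is essentially the paper's: the perimeter functional on the cyclic configuration space of $\partial T$, critical points $\leftrightarrow$ length-$p$ trajectories, $D_p$-orbits $\leftrightarrow$ distinct trajectories, primality giving a free $\mathbb Z/p$-action, and a device to neutralize the degeneration locus (the paper uses Farber--Tabachnikov's compact invariant manifold with boundary $G_\varepsilon(\partial T,p)\simeq_{D_p}G(\partial T,p)$ together with a Lyusternik--Schnirelmann theorem for manifolds with boundary, where the gradient points strictly outward; your superlevel-set cutoff above $c^\ast$ plays the same role). But the proposal stops exactly where the actual content of the theorem begins. The entire lower bound rests on two pieces of equivariant topology that you explicitly defer as ``the principal difficulty'': (i) the computation $\hind_{\mathbb Z/p} G(S^{d-1},p)=(d-2)(p-1)+1$, which in the paper is Theorem~\ref{conf-space-ind} and is proved from Farber's description of the ring $H^*(G(S^{d-1},p),Z_p)$ (Theorems~\ref{conf-space-hom-even}, \ref{conf-space-hom-odd}) by a Borel spectral sequence argument, supplemented by the equivariant map $V_d^2\to G(S^{d-1},p)$ via inscribed regular $p$-gons to settle the case $p=3$; and (ii) the conversion of this index into a category bound $\cat\bigl(G(\partial T,p)/D_p\bigr)\ge (d-2)(p-1)+2$ via the Fadell--Husseini category-weight lemma for Bockstein classes (Lemmas~\ref{ls-homlenboc} and~\ref{ls-ind}). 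Neither of your two sketched plans is carried out, so there is no proof.

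There is also a quantitative confusion in the minimax bookkeeping. The cohomological index of the full configuration space is exactly $(d-2)(p-1)+1$: since $\mathbb Z/p$ acts freely and $G(S^{d-1},p)$ is homotopy equivalent to a complex of dimension $(d-2)(p-1)+1$, the index cannot exceed that number, so your stated goal ``show the $\mathbb Z/p$-index of $\{L\ge c\}$ is at least $(d-2)(p-1)+2$'' is unattainable for that invariant, and a naive cup-length estimate on the quotient would lose roughly a factor of two. The final $+1$ in the count does not come from the index of any subset; it comes from the fact that the classes $\beta(v)=y$ carry category weight $2$ (Lemma~\ref{ls-homlenboc}), which upgrades $\hind\ge n$ to $\cat\ge n+1$. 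In particular, even your ``plan B'' --- an equivariant test sphere of dimension $(d-2)(p-1)+1$ mapped into $\{L>c^\ast\}$, whose existence you do not establish --- would at best reproduce the index bound and would still require the category-weight step to reach $(d-2)(p-1)+2$. Finally, two small inaccuracies: reflections in $D_p$ in fact act freely on the cyclic configuration space for odd $p$ (a palindromic configuration forces two consecutive vertices to coincide), and in your superlevel-set scheme you would additionally have to prove that the relevant equivariant classes survive restriction to $\{L\ge c\}$ --- a step the paper's route through $G_\varepsilon(\partial T,p)$ and Theorem~\ref{ls-crit} avoids altogether.
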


The proof of this theorem is mainly based on results from~\cite{fartab99,far00}, calculating the cohomology ring of relevant configuration spaces. The case of non-prime length is not considered here.

\section{Cohomological index of $Z_p$-spaces}

In the sequel, if we want to consider $Z_p$ as a group of transformations, we denote it $G$. If it is used as a ring of coefficients for cohomology it is denoted $Z_p$.

Consider the cohomology algebra $A_G = H_G^*(\pt, Z_p) = H^*(BG, Z_p)$. It is well known~\cite{hsiang75} that $A_G$ has two generators $v,y$ of dimensions $1$ and $2$ respectively, the relations being
$$
v^2 = 0,\quad\beta(v) = y.
$$
We denote $\beta(x)$ the Bockstein homomorphism. We see that $A_G$ is $Z_p$ in every dimension.

\begin{defn}
Let $X$ be a $G$-space. The maximal $n$ such that the natural map 
$$
H_G^n(\pt, Z_p)\to H_G^n(X, Z_p)
$$
is nontrivial, is called the \emph{cohomological index} of $X$ and denoted $\hind X$.
\end{defn}

Note that if the map $H_G^n(\pt, Z_p)\to H_G^n(X, Z_p)$ is trivial for some $n$, it should be trivial for all larger $n$,
since every element of $A_G$ can be obtained from any nonzero element of less dimension by multiplications by $v$,$y$, elements of $Z_p^*$, and by applying the Bockstein homomorphism. 

Every $G$-space $X$ gives a fibration
$$
(X\times EG)/G\to BG.
$$

If $G$ acts on cohomology of $X$ trivially, there exists a spectral sequence with $E_2$-term equal to $H^*(X, Z_p)\otimes A_G$, converging to $H_G^*(X, Z_p)$. If $G$ has a nontrivial action on cohomology, $E_2$-term is
$E_2^{x,y} = H^x(G, H^y(X, Z_p))$, where $H^*(G, M) = \Ext_{Z_p[G]}^*(M, Z_p)$ is the cohomology of a $G$-module $M$.

If $G$ acts freely on $X$, we have $H_G^*(X, Z_p) = H^*(X/G, Z_p)$. In this case $\hind X$ does not exceed the dimension of $X$. 

The following property is obvious by the definition of index:

\begin{lem}[Monotonicity of index]
If there is a $G$-equivariant map $X\to Y$, then
$$
\hind X\le \hind Y.
$$
\end{lem}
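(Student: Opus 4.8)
The plan is to exploit functoriality of the Borel construction. A $G$-equivariant map $f\colon X\to Y$ induces, upon taking the product with $EG$ and passing to $G$-quotients, a map $\bar f\colon (X\times EG)/G\to (Y\times EG)/G$ commuting with the projections to $BG$, and hence a homomorphism $f^*\colon H_G^*(Y,Z_p)\to H_G^*(X,Z_p)$ of $A_G$-algebras. Moreover $X$ and $Y$ each carry the unique $G$-map to $\pt$, and these are compatible with $f$ (the triangle $X\to Y\to\pt$ commutes trivially). Applying $H_G^*(-,Z_p)$, we obtain in each degree $n$ a commutative diagram in which the composite $H_G^n(\pt,Z_p)\to H_G^n(Y,Z_p)\to H_G^n(X,Z_p)$ is exactly the canonical map $H_G^n(\pt,Z_p)\to H_G^n(X,Z_p)$ appearing in the definition of $\hind X$.

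Now set $m=\hind Y$. By the definition of $\hind Y$, together with the remark made just after the definition of the index (triviality of the canonical map in one degree forces triviality in all larger degrees), the map $H_G^n(\pt,Z_p)\to H_G^n(Y,Z_p)$ vanishes for every $n>m$. Composing with $f^*$, the map $H_G^n(\pt,Z_p)\to H_G^n(X,Z_p)$ also vanishes for every $n>m$. Hence the largest $n$ for which this map is nontrivial is at most $m$, i.e.\ $\hind X\le m=\hind Y$, as claimed.

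There is no genuine obstacle here: the argument is pure naturality of equivariant cohomology and of the map induced by $X\to\pt$. The only point requiring a moment's care is the bookkeeping around the definition of $\hind$ as a \emph{maximal} degree, where one must invoke the already-established monotonicity in $n$ to pass from ``the canonical map is trivial for all $n>m$'' to the asserted inequality of indices.
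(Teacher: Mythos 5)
Your argument is correct and is exactly the naturality argument the paper has in mind: the paper offers no written proof, declaring the lemma ``obvious by the definition of index,'' and your functoriality-of-the-Borel-construction reasoning is the standard justification. One minor remark: the appeal to the remark about triviality propagating to higher degrees is not even needed, since $\hind Y=m$ being the \emph{maximal} nontrivial degree already gives vanishing of $H_G^n(\pt,Z_p)\to H_G^n(Y,Z_p)$ for all $n>m$.
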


\section{Lyusternik-Schnirelmann theory}

We recall the definition of the Lyusternik-Schnirelmann category of a topological space.

\begin{defn}
\emph{The relative category} of a pair $Y\subseteq X$ is the minimal cardinality of a family of contractible in $X$ open subsets of $X$, covering $Y$. It is denoted $\cat_X Y$. The number $\cat_X X = \cat X$ is called \emph{the category} of $X$.
\end{defn}

Here we state the Lyusternik-Schnirelmann theorem on the number of critical points in the form it is used in~\cite{fartab99} for manifolds with boundary.

\begin{thm}
\label{ls-crit}
Let $X$ be a compact smooth manifold with boundary. Let $f :X\to\mathbb R$ be a smooth function, its gradient at $\partial X$ always having strictly outward (inward) direction. Then $f$ has at least $\cat X$ critical points.
\end{thm}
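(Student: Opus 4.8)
The plan is to run the classical Lyusternik--Schnirelmann minimax argument driven by the negative gradient flow; the only point specific to manifolds with boundary is that this flow must be shown to keep $X$ invariant, which is exactly where the hypothesis on $\nabla f|_{\partial X}$ is used. First I would normalize: if $\nabla f$ points strictly inward along $\partial X$, replace $f$ by $-f$, which has the same critical points and whose gradient then points outward; so from now on assume $\nabla f$ points strictly outward along $\partial X$, equivalently $-\nabla f$ points strictly inward. Fix an auxiliary Riemannian metric near $X$. On a compact band $B=\{x\in X: a\le f(x)\le b\}$ containing no critical point, $\|\nabla f\|$ is bounded away from $0$, so $V=-\nabla f/\|\nabla f\|^2$ is a smooth field near $B$; multiplying $V$ by a bump function equal to $1$ on $\{a\le f\le b\}$ and vanishing outside $\{a-\delta<f<b+\delta\}$ yields a globally defined field that is still a nonnegative multiple of $-\nabla f$, hence never outward-pointing on $\partial X$. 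By compactness its flow $\phi_t$ is defined for all $t\ge 0$ and maps $X$ into $X$, and along each nonconstant trajectory $f$ decreases at unit speed. From this one obtains, just as in Morse theory but with every deformation taking place inside $X$, the two standard deformation lemmas. (i) If the level $\{f=c\}$ contains no critical point, then for small $\epsilon>0$ the sublevel set $X^{c+\epsilon}$, where $X^s:=\{x\in X:f(x)\le s\}$, deforms within $X$ into $X^{c-\epsilon}$, so $\cat_X X^{c+\epsilon}\le\cat_X X^{c-\epsilon}$. (ii) If $K_c\subset\int X$ is the critical set on $\{f=c\}$ and $U\supseteq K_c$ is open, then for small $\epsilon>0$ the set $X^{c+\epsilon}$ deforms within $X$ into $X^{c-\epsilon}\cup U$, so $\cat_X X^{c+\epsilon}\le\cat_X X^{c-\epsilon}+\cat_X U$.

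Next I would run the minimax. Put $m=\cat X$ and, for $k=1,\dots,m$,
$$
c_k=\inf\{\,c\in\mathbb R:\cat_X X^c\ge k\,\}.
$$
Since $X$ is compact, $X^c=X$ for large $c$ and $X^c=\emptyset$ for $c<\min_X f$, so each $c_k$ is finite and $\min_X f\le c_1\le\cdots\le c_m$. Lemma (i) forces every $c_k$ to be a critical value: otherwise $\cat_X X^{c_k+\epsilon}\le\cat_X X^{c_k-\epsilon}\le k-1$, contradicting $\cat_X X^{c_k+\epsilon}\ge k$. Finally, suppose a critical value $c$ occurs with multiplicity $j+1$, i.e.\ $c=c_k=\cdots=c_{k+j}$ with $j\ge 1$. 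If $K_c$ is infinite we are already done; otherwise take a minimal family of open sets $U_1,\dots,U_r$, each contractible in $X$, covering $K_c$, so $r=\cat_X K_c$, and set $U=U_1\cup\cdots\cup U_r$, an open neighbourhood of $K_c$ with $\cat_X U=\cat_X K_c$. By Lemma (ii), for small $\epsilon$,
$$
k+j\le\cat_X X^{c+\epsilon}\le\cat_X X^{c-\epsilon}+\cat_X K_c\le(k-1)+\cat_X K_c,
$$
hence $\cat_X K_c\ge j+1$; since the relative category of an $N$-point set is at most $N$, the level $c$ carries at least $j+1$ critical points. Summing over the distinct values among $c_1,\dots,c_m$ yields at least $m=\cat X$ critical points, as claimed.

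The crux is the invariance of $X$ under the deformations: everything afterwards is the textbook argument, but on a manifold with boundary a carelessly chosen gradient-type flow may carry points out through $\partial X$, in which case the sublevel sets fail to be positively invariant and the deformation lemmas break down. The transversality hypothesis on $\nabla f$ along $\partial X$, once normalized to the outward direction, is precisely what makes $-\nabla f$, and hence the cut-off field $V$, inward along $\partial X$, so that $X$ and all the $X^s$ remain invariant. Two minor points also need attention, both dispatched by compactness of $X$ in the usual way: that $\|\nabla f\|$ is bounded below on a critical-point-free band, so that the reparametrized flow reaches the lower sublevel set in finite time; and that, when transporting a categorical cover along a deformation, the covering sets can still be chosen open in $X$ rather than merely in the sublevel set.
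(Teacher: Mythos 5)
Your proof is correct: it is the classical Lyusternik--Schnirelmann minimax argument, with the boundary hypothesis used exactly where it is needed, namely to make the cut-off, reparametrized negative gradient field zero or strictly inward along $\partial X$, so that $X$ and its sublevel sets are positively invariant and both deformation lemmas take place inside $X$. The paper itself gives no proof of this theorem---it merely recalls it in the form used in the cited work of Farber and Tabachnikov---so there is no argument of the paper to compare against; yours supplies the standard one. One small remark: the strong form of your deformation lemma (ii), that all of $X^{c+\epsilon}$ deforms into $X^{c-\epsilon}\cup U$, requires a bit more care than the plain cut-off construction provides (a trajectory may enter and leave the region where the field is damped near $K_c$ without reaching $X^{c-\epsilon}$), but the inequality you actually use, $\cat_X X^{c+\epsilon}\le \cat_X X^{c-\epsilon}+\cat_X U$, already follows from the weaker standard form (deform $X^{c+\epsilon}\setminus U$ into $X^{c-\epsilon}$) together with subadditivity and deformation-monotonicity of $\cat_X$, so the argument is unaffected.
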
 

We will need the following lemma, that is a special case of the results of~\cite{fh92}.

\begin{lem}
\label{ls-homlenboc}
Let a space $X$ have $k$ cohomology classes $x_1,\ldots,x_k\in H^1(X, Z_p)$ and $l$ classes of arbitrary positive dimension $y_1,\ldots,y_l\in H^*(X, Z_p)$. If
$$
\beta(x_1)\beta(x_2)\dots\beta(x_k)y_1\dots y_l \not= 0,
$$
then $\cat X\ge 2k + l + 1$.
\end{lem}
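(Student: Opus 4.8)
This is the special case of the Fadell--Husseini category-weight inequality of~\cite{fh92} in which every class appearing is either a positive-dimensional class or the Bockstein of a one-dimensional one, and my plan is to run the corresponding direct argument rather than quote the general machinery. The backbone will be the classical cup-length lower bound for $\cat$: if $X$ is covered by $m$ open sets that are contractible in $X$ and $a_1,\dots,a_m$ are positive-dimensional classes with $a_s$ restricting to $0$ on the $s$-th covering set, then $a_1\cdots a_m=0$, since this product is pulled back from $H^*(X,X)=0$ along the relative cup product. The one extra input I need is that a Bockstein $\beta(x)$ with $x\in H^1(X,Z_p)$ restricts to $0$ not merely on one set contractible in $X$, but already on a union of \emph{two} such sets; so it should be counted twice in the cup-length bookkeeping, and a product of $k$ such Bocksteins together with $l$ further positive-dimensional classes will then vanish on every union of $2k+l$ sets contractible in $X$, forcing $\cat X\ge 2k+l+1$.

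Concretely, I would argue by contradiction. Assuming $\cat X\le 2k+l$, fix a cover $X=U_1\cup\dots\cup U_{2k+l}$ by open sets contractible in $X$ (a cover by fewer sets is padded by repetitions), group them as $W_i=U_{2i-1}\cup U_{2i}$ for $1\le i\le k$ and $W_{k+j}=U_{2k+j}$ for $1\le j\le l$, and note $W_1\cup\dots\cup W_{k+l}=X$. Since each inclusion $U_m\hookrightarrow X$ is null-homotopic, every positive-dimensional class of $X$ restricts to $0$ on $U_m$; hence $y_j$ lifts to a class in $H^*(X,W_{k+j})$, and $x_i|_{U_{2i-1}}=x_i|_{U_{2i}}=0$. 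The heart of the argument is the vanishing $\beta(x_i)|_{W_i}=0$. I would obtain it from the Mayer--Vietoris sequence of the cover $\{U_{2i-1},U_{2i}\}$ of $W_i$: the class $x_i|_{W_i}$ dies in $H^1(U_{2i-1},Z_p)\oplus H^1(U_{2i},Z_p)$, hence equals $\delta(c)$ for some $c\in H^0(U_{2i-1}\cap U_{2i},Z_p)$, where $\delta$ is the connecting homomorphism; and since the Bockstein is a stable operation, so commutes with $\delta$ up to sign, and vanishes on $H^0$ (the reduction $H^0(\,\cdot\,,Z)\to H^0(\,\cdot\,,Z_p)$ being surjective), we get $\beta(x_i)|_{W_i}=\beta(x_i|_{W_i})=\pm\,\delta(\beta(c))=0$. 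Thus $\beta(x_i)$ lifts to $H^2(X,W_i)$.

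To conclude, I would multiply the chosen lifts of $\beta(x_1),\dots,\beta(x_k),y_1,\dots,y_l$ inside the relative cohomology groups, using the relative cup products $H^*(X,A)\otimes H^*(X,B)\to H^*(X,A\cup B)$. The resulting class lies in $H^*(X,W_1\cup\dots\cup W_{k+l})=H^*(X,X)=0$, and its image in $H^*(X,Z_p)$ is exactly $\beta(x_1)\cdots\beta(x_k)\,y_1\cdots y_l$, contradicting the assumed non-vanishing; therefore $\cat X\ge 2k+l+1$. I expect the only genuinely delicate point to be the vanishing $\beta(x_i)|_{W_i}=0$ --- that the Bockstein of a class pulled back along a union of two null-homotopic inclusions is already zero --- which is precisely what lets a one-dimensional class together with its Bockstein count for two units of category; the reduction to a cover by exactly $2k+l$ contractible sets, the compatibility of relative and absolute cup products, and the excision used to define the relative product over an open cover are all routine.
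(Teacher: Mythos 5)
Your argument is correct, but it is worth noting that the paper does not prove this lemma at all: it simply quotes it as a special case of the category-weight results of Fadell and Husseini~\cite{fh92}, where the key point is that for odd $p$ the Bockstein $\beta(x)$ of a one-dimensional class has (strict) category weight $\ge 2$. What you have done is reprove exactly that special case by hand: your Mayer--Vietoris step showing $\beta(x_i)|_{U_{2i-1}\cup U_{2i}}=0$ when $x_i$ restricts to zero on each piece is precisely the weight-two property, and the rest is the classical relative cup-length argument (lifts to $H^*(X,W_s)$, relative products landing in $H^*(X,X)=0$). Compared with citing~\cite{fh92}, your route is elementary and self-contained, works in singular cohomology for arbitrary spaces and arbitrary open sets (no representability of $H^1$ or CW hypotheses needed), while the citation buys the general machinery valid for all stable operations rather than just the Bockstein in degree one. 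Two small remarks: the vanishing of $\beta$ on $H^0$ is cleanest via the coefficient sequence $0\to Z_p\to Z_{p^2}\to Z_p\to 0$ (any locally constant $Z_p$-valued class lifts to $Z_{p^2}$), which also lets you bypass the sign discussion entirely --- lift $c$ to $H^0(U_{2i-1}\cap U_{2i},Z_{p^2})$ and use naturality of the Mayer--Vietoris sequence in the coefficients to conclude that $x_i|_{W_i}$ lifts to $Z_{p^2}$-coefficients, hence has vanishing Bockstein by definition; and your bookkeeping $2k+l+1$ is consistent with the paper's (non-normalized) convention that $\cat$ counts the sets of the cover, which is the convention used later in Lemma~\ref{ls-ind}.
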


Actually, we use the following corollary of the previous lemma.

\begin{lem}
\label{ls-ind}
Let a finite group $G$ act freely on a space $X$, assume that for some subgroup of prime order $G'\subseteq G$ we have $\hind_{G'} X\ge n$. Then $\cat X/G\ge n+1$.
\end{lem}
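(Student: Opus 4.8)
The plan is to reduce the statement to a relative-cohomology estimate on $X/G'$ — essentially the mechanism behind Lemma~\ref{ls-homlenboc} — transported along the covering $q\colon X/G'\to X/G$. Since $G$ acts freely on $X$ so does $G'$, the map $q$ is a $[G:G']$-sheeted covering, and $H_{G'}^*(X,Z_p)=H^*(X/G',Z_p)$, where throughout $p=|G'|$. The hypothesis $\hind_{G'}X\ge n$ means precisely that the classifying map $\phi\colon X/G'\to BG'$ of the covering $X\to X/G'$ induces a nonzero homomorphism $H^n(BG',Z_p)\to H^n(X/G',Z_p)$. Write $n=2m+\epsilon$ with $\epsilon\in\{0,1\}$. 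Because $G'$ has prime order, $A_{G'}=H^*(BG',Z_p)$ is generated by the one-dimensional class $v$ together with $y=\beta(v)$, so $A_{G'}^n$ is spanned by $v^{\epsilon}y^{m}$; since $\phi^{*}$ is a ring homomorphism commuting with the Bockstein, putting $\bar v=\phi^{*}v\in H^1(X/G',Z_p)$ we obtain
$$
\bar v^{\,\epsilon}\,\beta(\bar v)^{m}\ \ne\ 0\quad\text{in}\quad H^{n}(X/G',Z_p).
$$

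Next I would examine how a categorical cover of $X/G$ pulls back. Suppose $X/G=U_1\cup\dots\cup U_s$ with each $U_i$ open and contractible in $X/G$, and put $W_i=q^{-1}(U_i)$, so $\bigcup_i W_i=X/G'$. The restriction of the $G'$-covering $X\to X/G'$ to $W_i$ is trivial — the preimage of $U_i$ in $X$ is a trivial $G$-covering of $U_i$, as $U_i$ is contractible in $X/G$ — hence $\phi|_{W_i}$ is null-homotopic and $\bar v|_{W_i}=0$, and therefore $\beta(\bar v)|_{W_i}=0$. Moreover, for any indices $i,j$, Mayer--Vietoris for the open cover $\{W_i,W_j\}$ of $W_i\cup W_j$ exhibits $\bar v|_{W_i\cup W_j}$ as coming from $H^0(W_i\cap W_j,Z_p)$ under the connecting homomorphism, and the Bockstein vanishes there, so $\beta(\bar v)|_{W_i\cup W_j}=0$ as well.

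Finally I would do the count. By the vanishings just established, $\bar v$ lifts to a class in $H^1(X/G',W_i;Z_p)$ and $\beta(\bar v)$ lifts to a class in $H^2(X/G',W_i\cup W_j;Z_p)$ (or, when $i=j$, in $H^2(X/G',W_i;Z_p)$). Forming the relative cup product of $m$ lifts of $\beta(\bar v)$ and, if $\epsilon=1$, one lift of $\bar v$, we get a class in $H^{n}\big(X/G',\bigcup_{i\in S}W_i;Z_p\big)$ for some index set $S$ with $|S|\le n$, and this class maps onto $\bar v^{\,\epsilon}\beta(\bar v)^{m}\ne0$ in $H^n(X/G',Z_p)$; hence $\bigcup_{i\in S}W_i\ne X/G'$. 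Since $S$ may be taken to consist of any $\le n$ of the indices $1,\dots,s$ (filling the slots with repetitions if $s<n$), this is impossible when $s\le n$, so $s\ge n+1$; as the categorical cover was arbitrary, $\cat(X/G)\ge n+1$.

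The step I expect to be the main obstacle is the last one. One must check that the Fadell--Husseini ``weight $2$'' behaviour of the Bockstein on one-dimensional classes — the content of Lemma~\ref{ls-homlenboc} — goes through for the cover $\{W_i\}$ of $X/G'$, whose members are not contractible in $X/G'$ but merely ``$\bar v$-trivial.'' Making this precise is exactly the Mayer--Vietoris and relative-product bookkeeping of the two preceding paragraphs, and it is once again where the prime order of $G'$ is used essentially: for a one-dimensional class the connecting homomorphism controlling $\bar v|_{W_i\cup W_j}$ originates in degree $0$, a degree killed by the Bockstein, whereas in higher degrees the argument breaks down.
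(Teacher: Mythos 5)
Your argument is correct, but it is organized quite differently from the paper's, which is a two-step reduction by citation: the paper first quotes the covering inequality $\cat X/G \ge \cat X/G'$ (noted in \cite{far00}), then applies the Fadell--Husseini result, Lemma~\ref{ls-homlenboc}, to the classes $\bar v$ and $\beta(\bar v)$ in $H^*(X/G',Z_p)=H^*_{G'}(X,Z_p)$, whose product $\bar v^{\epsilon}\beta(\bar v)^m$ ($n=2m+\epsilon$) is nonzero precisely because $\hind_{G'}X\ge n$. You bypass both citations: you pull a categorical cover $\{U_i\}$ of $X/G$ back to $X/G'$ and run the category-weight mechanism directly on the sets $W_i=q^{-1}(U_i)$, which are not contractible in $X/G'$ but merely ``$\bar v$-trivial''; the commutation of the Bockstein with the Mayer--Vietoris connecting map and its vanishing on $H^0$ give $\beta(\bar v)|_{W_i\cup W_j}=0$, and the relative cup-product count then forces $s\ge n+1$. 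In effect you reprove, in a merged and slightly sharpened form, exactly the two ingredients the paper cites: your weight-two statement for $\beta(\bar v)$ needs only $\bar v$-triviality of the sets, which is why you never have to lift contractions to $X/G'$ (the usual proof of $\cat X/G\ge\cat X/G'$). The paper's route buys brevity; yours buys self-containedness and makes visible where the prime order of $G'$ and the Bockstein enter. Two routine points to flag: the triviality of the restricted coverings over the $W_i$ uses that pullbacks of coverings along homotopic maps are isomorphic, which requires the mild local niceness available in the paper's applications (open subsets of manifolds or CW-complexes); and your formula $\bar v^{\epsilon}\beta(\bar v)^m$ also covers $p=2$, where $\beta(v)=v^2$, so no parity restriction is needed in the lemma itself.
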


\begin{proof}
As it was noted in~\cite{far00} $\cat X/G \ge \cat X/G'$. 

$G'$ acts freely on $X$, hence $H^*(X/G', Z_p) = H_{G'}^*(X, Z_p)$. The latter algebra has a nonzero product of $A_{G'}$ elements $y^{\frac{n}{2}}$ or $vy^{\frac{n-1}{2}}$, for even or odd $n$ respectively. So we can apply Lemma~\ref{ls-homlenboc}.
\end{proof}

\section{The configuration space}

Following the papers~\cite{fartab99, far00} we describe the configuration spaces that arise naturally in the billiard problems. For a topological space $X$ denote
$$
G(X, p) = \{(x_1,\ldots,x_p)\in X^{p} : x_1\not=x_2,x_2\not=x_3,\ldots,x_{p-1}\not=x_p,x_p\not=x_1\}.
$$

The space $G(X, p)$ has an action of the dihedral group $D_p$, which has a subgroup of even permutations, isomorphic to $Z_p=G$.

Proposition 4.1 of~\cite{fartab99} claims that $G(\partial T, p)$ contains a $D_p$-invariant compact manifold with boundary $G_\varepsilon(\partial T, p)$, which is $D_p$-equivariantly homotopy equivalent to $G(\partial T, p)$.

In this section we find the cohomological $G$-index of $G(\partial T, p)$, obviously equal to the index of $G_\varepsilon(\partial T, p)$. The space $\partial T$ is homeomorphic to a $d-1$-sphere, so we write $G(S^{d-1}, p)$.

The first lower bound on the index of $G(S^{d-1}, p)$ will be obtained by considering another space: $G(\mathbb R^d, p)$. Recall a special case of Proposition 2.2 from~\cite{fartab99}.

\begin{thm}
\label{Rd-conf-space-hom}
Let $d\ge 2$. The cohomology algebra $H^*(G(\mathbb R^d, p), Z_p)$ is multiplicatively generated by $d-1$-dimensional classes $s_1,\ldots, s_p$ and relations
$$
s_1^2 = s_2^2 = \dots = s_p^2 = 0,\quad s_1s_2\dots s_{p-1} + s_2s_3\dots s_p + s_3s_4\dots s_ps_1
+ s_ps_1\dots s_{p-2} = 0.
$$
The group $G$ acts on $s_1,\ldots, s_p$ by cyclic permutations.
\end{thm}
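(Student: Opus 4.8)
The plan is to reduce the statement to the cohomology of a linear subspace arrangement complement, compute that inductively, and then use the cyclic symmetry to identify the relation. First I would pass to difference coordinates $y_i = x_{i+1}-x_i$ and divide by translations, identifying $G(\mathbb{R}^d,p)$ up to homotopy with
$$W_p = \{(y_1,\dots,y_p)\in(\mathbb{R}^d)^p : y_1+\dots+y_p=0,\ y_i\neq 0\ \text{for all } i\},$$
the complement in $\{\sum y_i=0\}$ of the $p$ codimension-$d$ subspaces $\{y_i=0\}$. Tensoring, $\{\sum y_i=0\}=V_0\otimes\mathbb{R}^d$ with $V_0=\{a\in\mathbb{R}^p:\sum a_i=0\}$, and $\{y_i=0\}=H_i\otimes\mathbb{R}^d$, where $\{H_i\}$ is the generic central arrangement of $p$ hyperplanes in $V_0\cong\mathbb{R}^{p-1}$ whose unique circuit is the whole set, with dependence $e_1^*+\dots+e_p^*=0$. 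For $d=2$ this realizes $W_p$ as the complexified complement of $\{H_i\}$, so the Orlik--Solomon presentation of a generic arrangement already gives degree-$1$ generators with zero squares and a single relation from that circuit, i.e.\ the assertion; the $\mathbb{Z}_p$-action shifting the $y_i$ permutes the generators.

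For $d\ge 3$ I would run an induction on $p$, based on $W_2\simeq S^{d-1}$. Dropping the closing condition $x_p\neq x_1$ gives $\widetilde W\simeq(S^{d-1})^{p-1}$ (coordinates $y_1,\dots,y_{p-1}$), and $W_p=\widetilde W\setminus\Sigma$, where $\Sigma=\{y_1+\dots+y_{p-1}=0\}$ is a closed codimension-$d$ submanifold with trivial normal bundle, canonically $\Sigma\cong W_{p-1}$. By the Thom isomorphism (valid over $\mathbb{Z}_p$ since the normal bundle is trivial), the exact sequence of the pair becomes a Gysin sequence
$$\cdots\to H^{k-d}(\Sigma)\xrightarrow{\,i_!\,}H^k(\widetilde W)\xrightarrow{\,j^*\,}H^k(W_p)\xrightarrow{\,\partial\,}H^{k-d+1}(\Sigma)\to\cdots$$
of $H^*(\widetilde W)$-modules, with $\partial\big(j^*(a)\,x\big)=i^*(a)\,\partial(x)$. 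As $H^*(\widetilde W)$ and $H^*(\Sigma)$ sit in degrees divisible by $d-1$ while $i_!$ raises degree by $d$, we get $i_!=0$ for $d\ge 3$, so the sequence splits into short exact sequences of graded $\mathbb{Z}_p$-modules. The extra class $s_p$, pulled back from $S^{d-1}$ along $y\mapsto y_1+\dots+y_{p-1}$, restricts to the generator on a normal sphere, hence maps to $1\in H^0(\Sigma)$; multiplying it by the classes $j^*(\bar s_{i_1}\cdots\bar s_{i_r})$ produces lifts of all of $H^*(\Sigma)=H^*(W_{p-1})$. This shows $s_1,\dots,s_p$ generate, that $s_i^2=0$ (for $i<p$ from $H^*(\widetilde W)$, for $i=p$ by naturality of a pullback from $S^{d-1}$), and that in degree $(p-1)(d-1)$ there is exactly one linear relation among the $p$ products of $p-1$ generators, the push-forward of the inductive relation for $W_{p-1}$.

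I expect the main obstacle to be showing that this single relation is precisely the symmetric cyclic sum $s_1s_2\cdots s_{p-1}+s_2s_3\cdots s_p+\cdots=0$ rather than some other combination. I would settle it by equivariance: the generator $G=\mathbb{Z}_p$ of the cyclic shift sends $s_i\mapsto s_{i+1}$, hence sends each ``consecutive'' monomial $c_i=s_is_{i+1}\cdots s_{i+p-2}$ to $c_{i+1}$ with no sign (the factors only shift in place). The one-dimensional space of relations among $c_1,\dots,c_p$ is thus a $G$-submodule of the permutation module $\mathbb{F}_p[t]/(t^p-1)=\mathbb{F}_p[t]/(t-1)^p$, which is local with one-dimensional socle spanned by $c_1+\dots+c_p$; hence the relation is forced to be $c_1+\dots+c_p=0$, the assertion. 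The remaining points are routine: the Thom isomorphism with $\mathbb{Z}_p$-coefficients, vanishing of positive-degree squares of sphere pullbacks, the base case $W_2\simeq S^{d-1}$, and for $d=2$ reading the same relation off Orlik--Solomon, the apparent sign bookkeeping for even $d$ disappearing once one works in the basis $\{c_i\}$.
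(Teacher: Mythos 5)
The paper itself does not prove Theorem~\ref{Rd-conf-space-hom}: it is quoted as a special case of Proposition 2.2 of \cite{fartab99}. So your argument is necessarily an independent derivation rather than a reproduction of an in-paper proof, and as a derivation it is essentially sound. The passage to difference coordinates, the identification of $G(\mathbb R^d,p)$ with $W_p=\widetilde W\setminus\Sigma$ where $\widetilde W\simeq (S^{d-1})^{p-1}$ and $\Sigma\cong W_{p-1}$ is a closed codimension-$d$ submanifold with trivialized normal bundle, the vanishing of $i_!$ for $d\ge 3$ by degree divisibility, and the resulting short exact sequences do give the additive structure and generation by $s_1,\dots,s_p$; the socle argument is correct and is the right way to pin down the relation, since $\mathbb F_p[t]/(t^p-1)=\mathbb F_p[t]/(t-1)^p$ is local with socle spanned by $1+t+\dots+t^{p-1}$, so any nonzero space of relations among the cyclically permuted monomials $c_i$ contains $c_1+\dots+c_p$; and for $d=2$ the Orlik--Solomon circuit relation $\sum_j(-1)^{j-1}s_1\cdots\hat s_j\cdots s_p$ does coincide with the cyclic sum because $p$ is odd. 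What you gain over the citation is a short self-contained proof; what you lose is generality (Farber--Tabachnikov's statement is for any number of points and does not use the coincidence between the number of points and the coefficient prime).

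Two points need care in a write-up. First, the socle trick only works at the top stage, where the number of points equals the coefficient prime: at the intermediate stages $2\le n<p$ of your induction the group ring $\mathbb F_p[Z_n]$ is semisimple, so the phrase ``the push-forward of the inductive relation for $W_{p-1}$'' cannot be justified the same way. This is harmless but forces you to restate the induction hypothesis as: the cohomology of $W_n$ is concentrated in degrees divisible by $d-1$, the canonical classes generate, and the square-free monomials of degree $<n(d-1)$ are a basis --- which is all the inductive step actually uses; the precise form of the relation should be proved only at $n=p$. Second, to conclude that the listed relations give a \emph{complete} presentation you must compare Hilbert functions: in the square-free algebra the ideal generated by the cyclic relation is one-dimensional in degree $(p-1)(d-1)$ and contains $s_1\cdots s_p$ in degree $p(d-1)$, so the abstract algebra has dimensions $\binom pk$ for $k\le p-2$, then $p-1$, then $0$, matching the dimensions produced by your exact sequences; this step is only implicit in your sketch and should be made explicit.
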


Considering the action of $G$ on $G(\mathbb R^d, p)$, we deduce a corollary.

\begin{thm}
\label{Rd-conf-space-ind}
If $d\ge 2$, then $\hind G(\mathbb R^d, p) = (d-1)(p-1)$.
\end{thm}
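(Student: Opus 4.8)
We need to show $\hind G(\mathbb R^d,p) = (d-1)(p-1)$. This splits into an upper bound and a lower bound.

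For the upper bound, I would argue via the Leray–Serre spectral sequence of the Borel fibration $(G(\mathbb R^d,p)\times EG)/G \to BG$, with $E_2$-term $H^*(G(\mathbb R^d,p),Z_p)\otimes A_G$ (since $G=Z_p$ acts on the cohomology by cyclic permutation of the $s_i$, one must actually use the twisted form $E_2^{x,y}=H^x(G,H^y(G(\mathbb R^d,p),Z_p))$; the key point will be to track what survives). The essential input is Theorem \ref{Rd-conf-space-hom}: the top-dimensional nonzero cohomology of $G(\mathbb R^d,p)$ lives in degree $(d-1)(p-1)$ — the products $s_{i_1}\cdots s_{i_{p-1}}$ of $p-1$ distinct generators span it, and the single relation identifies the $p$ cyclic such monomials up to sign, so $H^{(d-1)(p-1)}$ is one-dimensional and everything above vanishes. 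Since $\hind X$ cannot exceed $\dim X$ when the action is free — and $G$ acts freely on $G(\mathbb R^d,p)$ because $x_i\neq x_{i+1}$ forbids a fixed point of a cyclic shift — but more simply: the image of $A_G$ in $H_G^*$ in degree $n$ must map nontrivially into the fiber cohomology in total degree $n$; once $n > (d-1)(p-1)$ there is no room, giving $\hind G(\mathbb R^d,p) \le (d-1)(p-1)$.

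For the lower bound I would exhibit a $G$-equivariant map into $G(\mathbb R^d,p)$ from a space of known large index, then invoke Monotonicity of index (Lemma 1). The natural candidate is the join $(S^{d-1})^{*p}$ with the cyclic $G$-action permuting the $p$ join factors — or, equivalently, the unit sphere $S^{d(p-1)-1}$ in the ``reduced'' summand of the regular-representation-like module $(\mathbb R^d)^p$ modulo the diagonal, on which $G$ acts freely with $\hind = d(p-1)-1$... but this overshoots, so instead I would use the sphere of the module $\bigoplus_{j=1}^{(d-1)(p-1)/?}$ — more carefully: the correct comparison space is the one whose index is exactly $(d-1)(p-1)$, namely the unit sphere $S\bigl(W\bigr)$ where $W$ is the sum of the nontrivial $Z_p$-isotypic components appearing with the right multiplicity. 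Concretely, following \cite{fartab99}, one maps $S(W)\to G(\mathbb R^d,p)$ equivariantly; since $G$ acts freely on both, $\hind S(W) = \dim W - 1 = (d-1)(p-1)$, and monotonicity forces $\hind G(\mathbb R^d,p)\ge (d-1)(p-1)$.

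The main obstacle is the lower bound: one must pin down the right $G$-representation $W$ and actually construct the equivariant map $S(W)\to G(\mathbb R^d,p)$ (placing $p$ labelled points in $\mathbb R^d$ subject only to no two cyclically-consecutive ones coinciding, equivariantly in a $(d-1)(p-1)$-sphere's worth of parameters), rather than merely counting dimensions. Equivalently, one can avoid the explicit map and instead compute directly inside $H_G^*(G(\mathbb R^d,p),Z_p)$: show that the spectral sequence differentials cannot kill the class detecting degree $(d-1)(p-1)$, using that the relation in Theorem \ref{Rd-conf-space-hom} is the \emph{only} relation and is $G$-equivariantly a norm-type element, so the transgression pattern is forced. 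I expect the cleanest route is this cohomological one — identify the permutation module structure on the span of the $s_i$, note $H^*(G(\mathbb R^d,p))$ is a free module over the exterior-type subalgebra in a way compatible with $G$, and read off that the bottom class of $A_G$ survives up to total degree exactly $(d-1)(p-1)$.
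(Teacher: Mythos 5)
Your proposal has two genuine gaps, one in each direction of the inequality. For the upper bound, the argument ``the image of $A_G$ in $H_G^n$ must map nontrivially into the fiber cohomology in total degree $n$; once $n>(d-1)(p-1)$ there is no room'' is not valid: the pullback of $A_G$ is detected in the \emph{base} filtration, i.e.\ on the bottom row $E_\infty^{n,0}$, and the vanishing of the fiber cohomology above degree $(d-1)(p-1)$ does not by itself kill that row (for an action with a fixed point the bottom row survives entirely, no matter how small the fiber dimension is). The bottom row can only die through differentials, and identifying which differential can reach it is exactly the content of the paper's proof: one needs the $Z_p[G]$-module structure of $H^*(G(\mathbb R^d,p),Z_p)$ --- free over $Z_p[G]$ in the intermediate degrees (so those rows of $E_2^{x,y}=H^x(G,H^y)$ are concentrated in column $0$), $Z_p$ in degree $0$, and $Z_p[G]/Z_p$ in the top degree $(d-1)(p-1)$ (note this top group is $(p-1)$-dimensional, not one-dimensional as you assert; the relation says the sum of the $p$ cyclic monomials vanishes, not that they coincide). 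Since the differentials are $A_G$-module maps, the intermediate rows cannot hit the bottom row, so the only possible killer is the single differential from the top row, which lands in $I_G^{(d-1)(p-1)+1}$; freeness of the action plus finite dimensionality make $\hind$ finite and force this differential to be nontrivial. This one argument delivers both bounds simultaneously; your appeal to ``$\hind\le\dim X$ for free actions'' only gives the weaker bound $dp$.

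For the lower bound, the representation-sphere strategy cannot work even in principle: $(d-1)(p-1)$ is even (as $p$ is odd), while every sphere $S(W)$ of a fixed-point-free $Z_p$-representation is odd-dimensional with $\hind S(W)=\dim S(W)$ odd, so monotonicity applied to any such map yields at best $(d-1)(p-1)-1$. (This parity obstruction is precisely why the paper treats the sphere case $G(S^{d-1},p)$, whose index is odd shifted by one, with a separate Stiefel-manifold map, and does \emph{not} use such a map here.) Your fallback --- computing directly in $H_G^*(G(\mathbb R^d,p),Z_p)$ and arguing the transgression pattern is forced --- is indeed the paper's route, but as written it is only a hope: without the module-theoretic inputs above (freeness of the intermediate cohomology as a $Z_p[G]$-module, the identification of the top row with $I_G^1$ shifted via the sequence $0\to Z_p\to Z_p[G]\to Z_p[G]/Z_p\to 0$, and the $A_G$-linearity of the differentials) nothing prevents an early differential from truncating the bottom row below $(d-1)(p-1)$, so the claimed lower bound is not established.
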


\begin{proof}
Denote $Y = G(\mathbb R^d, p)$.

Consider a spectral sequence with $E_2^{x,y} = H^x(G, H^y(Y, Z_p))$. Note that $H^*(Y, Z_p)$ are free $Z_p[G]$-modules in dimensions between $0$ and $(d-1)(p-1)$. In dimension $0$ it is $Z_p$, in dimension $(d-1)(p-1)$ it is $Z_p[G]/Z_p$.

Let $I_G^k$ be the ideal of $A_G$, consisting of elements of dimension at least $k$.

In $E_2$-term the bottom row is $A_G$, the top row is $I_G^1$ with shifted by $-1$ grading. The latter claim is deduced from the cohomology exact sequence for $0\to Z_p\to Z_p[G]\to Z_p[G]/Z_p\to 0$.

All other rows of $E_2$ have nontrivial groups in $0$-th column only. Every differential of the spectral sequence is a homomorphism of $A_G$-modules. Hence, the intermediate rows cannot be mapped non-trivially to the bottom row. The only nonzero differential can map the top row to a part of the bottom row, isomorphic to $I_G^{(d-1)(p-1) + 1}$. This map has to be nontrivial because the index of $Y$ is finite.
\end{proof}

We see that if $X$ contains $\mathbb R^d$, then $G(X, p)\supseteq G(\mathbb R^d, p)$. By the monotonicity of index we have $\hind G(X, p) \ge (d-1)(p-1)$, in particular $G(S^{d-1}, p)\ge (d-2)(p-1)$.

Then we need more precise estimates on $\hind G(S^{d-1},p)$. Recall two theorems from~\cite{far00} (Theorem 18, Theorem 19), describing the algebra $H^*(G(S^{d-1},p), Z_p)$. We use the notation $[n] = \{1,2,\ldots, n\}$.

\begin{thm}
\label{conf-space-hom-even}
Let $d\ge 4$ be even. Then $H^*(G(S^{d-1},p), Z_p)$ is generated by the elements
$$
u\in H^{d-1}(G(S^{d-1},p), Z_p),\quad s_i\in H^{i(d-2)}(G(S^{d-1},p), Z_p), i\in [p-2]
$$
and relations
$$
u^2 = 0,\quad s_is_j = \frac{(i+j)!}{i!j!}s_{i+j},\ \text{if}\ i+j\le p-2,\ \text{or otherwise}\ s_is_j=0.
$$
\end{thm}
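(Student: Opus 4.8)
The plan is to prove the statement by induction on $p$, computing $H^*(G(S^{d-1},p),Z_p)$ from a decomposition of the configuration space into a generic open stratum that fibers over a smaller cyclic configuration space and a closed stratum that is again a smaller cyclic configuration space; the evenness of $d$ will enter only through the vanishing of an Euler characteristic. Write $G_m=G(S^{d-1},m)$. Forgetting $x_p$ and separating off the locus $x_1=x_{p-1}$ gives $G_p=A_p\sqcup B_p$ with $B_p=\{x_1=x_{p-1}\}\cap G_p$ closed and $A_p=\{x_1\ne x_{p-1}\}\cap G_p$ open. On $A_p$ the projection $(x_1,\dots,x_p)\mapsto(x_1,\dots,x_{p-1})$ is a locally trivial bundle over $G_{p-1}$ with fiber $S^{d-1}\setminus\{x_1,x_{p-1}\}\simeq S^{d-2}$ — in fact the pullback of the bundle $\{((a,b),x):a\ne b,\ x\notin\{a,b\}\}\to F(S^{d-1},2)$ along $(x_1,\dots,x_{p-1})\mapsto(x_1,x_{p-1})$, the condition $x_1\ne x_{p-1}$ being exactly what keeps the two deleted points distinct. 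When $x_1=x_{p-1}$ the cycle on $x_1,\dots,x_{p-1}$ collapses to a cycle on $p-2$ points while $x_p$ is a free point avoiding $x_1$, so $B_p$ fibers over $G_{p-2}$ with contractible fiber $S^{d-1}\setminus\{x_1\}$, whence $B_p\simeq G_{p-2}$; moreover $B_p$ is a closed codimension-$(d-1)$ submanifold of $G_p$ whose normal bundle is a pullback of $TS^{d-1}$ and hence $Z_p$-oriented. The base cases are $G_2\simeq S^{d-1}$ and $G_1=\emptyset$.

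Two homological facts feed the induction. First, the Euler class of the sphere bundle $A_p\to G_{p-1}$ is the pullback of the Euler class of the above bundle over $F(S^{d-1},2)\simeq S^{d-1}$, which equals $e(TS^{d-1})$, i.e.\ $\chi(S^{d-1})$ times a generator of $H^{d-1}(S^{d-1})$, and therefore vanishes because $d$ is even; hence the Serre spectral sequence collapses and $H^*(A_p)\cong H^*(G_{p-1})\otimes\Lambda[e_p]$ with $\deg e_p=d-2$ (a short check, using that $H^{2(d-2)}$ of the base is spanned by the square of its degree-$(d-2)$ class together with the $D_p$-symmetry interchanging $x_1$ and $x_{p-1}$, shows that whatever $e_p^2$ is it does not disturb the argument). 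Second, the Thom isomorphism for the normal bundle of $B_p$ gives $H^k(G_p,A_p)\cong H^{k-(d-1)}(G_{p-2})$.

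The long exact sequence of the pair $(G_p,A_p)$ now reads $\cdots\to H^{k-(d-1)}(G_{p-2})\xrightarrow{\,i_!\,}H^k(G_p)\to H^k(A_p)\xrightarrow{\,\delta\,}H^{k-(d-2)}(G_{p-2})\to\cdots$, with $i_!$ the Gysin pushforward of $B_p\hookrightarrow G_p$. The crux — the step I expect to be the real obstacle — is to show $i_!=0$, so that the sequence breaks into short exact sequences $0\to H^*(G_p)\to H^*(A_p)\xrightarrow{\delta}H^{*-(d-2)}(G_{p-2})\to0$. For this it is enough to know that $i^*\colon H^*(G_p)\to H^*(B_p)$ is onto (then apply the projection formula) and that $i_!(1)=0$; the latter I would get by exhibiting a section of the bundle $G_p\to S^{d-1}$, $(x_1,\dots,x_p)\mapsto x_1$, of the form $x_1\mapsto(x_1,-x_1,x_1,-x_1,\dots)$ modified near the last coordinate (using a nonvanishing vector field on $S^{d-1}$, available since $d$ is even) so that its image is disjoint from $B_p$, whence $[B_p]$ has intersection number $0$ with it; the surjectivity of $i^*$ is carried along in the induction by lifting the generators of $H^*(B_p)$ explicitly. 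Granting this, the Poincaré polynomial of $H^*(G_p)$ is determined inductively, and it comes out with total dimension $2(p-1)$, matching $\Lambda[u]\otimes Z_p[s]/(s^{p-1})$ with $\deg u=d-1$ and $\deg s=d-2$.

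It remains to pin down the ring structure. The bundle $G_p\to S^{d-1}$ has the section just constructed, so its Serre spectral sequence collapses (a section forces the bottom edge to survive, killing the transgression), and $H^*(G_p)\cong\Lambda[u]\otimes H^*(F)$, where $u$ is the pullback of the generator of $H^{d-1}(S^{d-1})$ — so $u^2=0$ automatically — and $F$ is the ``chain space'' of $(x_2,\dots,x_p)$ with $x_2,x_p\ne x_1$ and consecutive points distinct. Setting $s_i\in H^{i(d-2)}(G_p)$ equal to $s^i/i!$ for a suitable lift $s$ of the degree-$(d-2)$ class of $F$, the asserted relations $s_is_j=\binom{i+j}{i}s_{i+j}$ (and $s_is_j=0$ when $i+j>p-2$) are equivalent to $H^*(F)\cong Z_p[s]/(s^{p-1})$, which one proves by the same $A\sqcup B$ induction applied to $F$ itself. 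The binomial coefficients are precisely the combinatorics of concatenating chains and reflect that $F$ is a finite truncation of $\Omega S^{d-1}$, whose mod-$p$ cohomology for even $d$ is the divided-power algebra on a class of degree $d-2$; verifying this truncated multiplicative structure, hand in hand with the nonvanishing of the connecting maps $\delta$, is where the substance of the argument lies.
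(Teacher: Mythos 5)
First, note what the paper actually does here: Theorem~\ref{conf-space-hom-even} is not proved in the paper at all. It is quoted from Farber (\cite{far00}, Theorems 18--19), where it is established with $\mathbb Q$ coefficients, and the paper adds only the one-line observation that the argument goes through verbatim over $Z_p$ because the only denominators occurring are $i!$ with $i\le p-2<p$. Your outline is, in spirit, a reconstruction of the Farber--Tabachnikov computation itself (the recursive decomposition of the cyclic configuration space into an open part fibering over $G(S^{d-1},p-1)$ and a closed part homeomorphic to a bundle over $G(S^{d-1},p-2)$, plus the interpretation of the ``chain space'' as a truncation of $\Omega S^{d-1}$ explaining the divided-power relations), so you are attempting far more than the paper does, and the skeleton you describe is the right one.

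As a proof, however, it has genuine gaps precisely at the points you flag. (1) The vanishing of the Gysin map $i_!$ is not established. Your argument is: $i^*$ surjective plus $i_!(1)=0$, with $i_!(1)=0$ deduced from a single intersection number with the section sphere. But intersection number zero with one cycle only shows that $i_!(1)$ pairs trivially with one homology class; to conclude $i_!(1)=0$ you need to know that $H^{d-1}(G(S^{d-1},p),Z_p)$ is one-dimensional and detected by that section, and that is exactly the kind of information the short exact sequences are supposed to produce -- as written the step is circular (equivalently: $i_!(1)$ comes from $H^{d-1}(G_p,A_p)$ and your cycle lies in $A_p$, so the pairing vanishes tautologically and proves nothing). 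The surjectivity of $i^*$ is likewise only promised (``lifting the generators explicitly''), not shown. (2) The multiplicative structure, which is the actual content of the theorem, is deferred to ``the same induction applied to $F$''; moreover the collapse claim for the Serre spectral sequence of $G_p\to S^{d-1}$ is false as stated: a section only kills the differentials landing in the base row, while over a sphere base the differential $d_{d-1}$ on fiber classes of degree $\ge d-1$ lands in positive-degree rows and is untouched by the section. Collapse follows only once one knows $H^*(F)$ is multiplicatively generated in degree $d-2$, i.e.\ the truncated divided-power structure you set out to prove. So the proposal is a correct plan whose two crucial steps are left unproved (and one of the proposed fixes does not work as stated); if the goal is just to justify the theorem as used in this paper, the efficient route is the paper's: cite \cite{far00} and observe that only $i!$ with $i\le p-2$ must be inverted, so the rational computation is valid over $Z_p$.
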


\begin{thm}
\label{conf-space-hom-odd}
Let $d\ge 3$ be odd. Then $H^*(G(S^{d-1},p), Z_p)$ is generated by the elements
$$
w\in H^{2d-3}(G(S^{d-1},p), Z_p),\quad t_i\in H^{i(2d-4)}(G(S^{d-1},p), Z_p), i\in \left[\frac{p-3}{2}\right]
$$
and relations
$$
w^2 = 0,\quad t_it_j = \frac{(i+j)!}{i!j!}t_{i+j},\ \text{if}\ i+j\le \frac{p-3}{2},\ \text{or otherwise}\ t_it_j=0.
$$
\end{thm}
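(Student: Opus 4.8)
The plan is to establish, by induction on the length, a formula for $H^*(G(S^{d-1},m),Z_p)$ valid for every $m\ge 2$, of which the assertion for prime $m=p$ is the special case of interest; write $n=d-1$, which is even since $d$ is odd. An auxiliary formula for even lengths has to be carried along, since the induction will relate $G(S^{d-1},m)$ to $G(S^{d-1},m-1)$ and $G(S^{d-1},m-2)$. For $m=2$, projecting onto $x_1$ has contractible fibre $S^{n}\setminus\{x_1\}$, so $G(S^{n},2)\simeq S^{n}$. For $m=3$, projecting onto $x_1$ exhibits $G(S^{n},3)$ as a fibre bundle over $S^{n}$ with fibre $S^{n}\setminus\{2\text{ points}\}\simeq S^{n-1}$; retracting the fibre onto an equatorial sphere identifies it, up to homotopy, with the unit tangent sphere bundle of $S^{n}$, so in the Serre spectral sequence the fibre fundamental class transgresses to the Euler class of $TS^{n}$, which is $\chi(S^{n})=2$ times the generator of $H^{n}(S^{n},Z_p)$. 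As $2$ is invertible modulo the odd prime $p$, the spectral sequence collapses to $H^*(S^{2n-1},Z_p)$, matching the formula (for $m=3$ the list of $t_i$ is empty, leaving only $w\in H^{2d-3}=H^{2n-1}$ with $w^2=0$).

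For the inductive step I would fix an index $k$ and forget the coordinate $x_k$. The image is the configuration space $B$ of the path graph obtained from the $m$-cycle by deleting the vertex $k$; iterating the contractible-fibre argument gives $B\simeq S^{n}$. Over the open locus $\{x_{k-1}\ne x_{k+1}\}\subset B$, which is precisely a copy of $G(S^{n},m-1)$ (regard $x_{k-1}$ and $x_{k+1}$ as adjacent), the fibre is $S^{n}\setminus\{x_{k-1},x_{k+1}\}\simeq S^{n-1}$, so there results an $S^{n-1}$-bundle $\pi\colon V\to G(S^{n},m-1)$; over the complementary locus $\{x_{k-1}=x_{k+1}\}$ the fibre is contractible, and its preimage $Z\subset G(S^{n},m)$ deformation retracts onto a copy of $G(S^{n},m-2)$ and is a closed submanifold of codimension $n$ with orientable normal bundle. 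I would then combine two long exact sequences: the Thom--Gysin sequence of the pair $(G(S^{n},m),V)$, with $H^*(G(S^{n},m),V)\cong H^{*-n}(G(S^{n},m-2),Z_p)$, and the Gysin sequence of the sphere bundle $V\to G(S^{n},m-1)$, featuring cup product with the Euler class $e\in H^{n}(G(S^{n},m-1),Z_p)$ and the fibre transfer $\pi_!\colon H^*(V)\to H^{*-n+1}(G(S^{n},m-1))$.

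The two computational inputs are the Euler class $e$ and the connecting homomorphisms. As in the case $m=3$, the bundle $V$ is assembled from the linking spheres of a collision point, so $e=\chi(S^{n})\cdot\iota^*(g)=2\,\iota^*(g)$, where $\iota\colon G(S^{n},m-1)\hookrightarrow B\simeq S^{n}$ and $g$ generates $H^{n}(S^{n},Z_p)$; this is exactly where the parity of $d$ is decisive. With $e$ in hand, splicing the two sequences should show that the classes $t_i$ are precisely the images of the products of the successive fibre fundamental classes of the $S^{n-1}$'s, and the standard "divided-power" bookkeeping of such iterated fibre classes forces the relation $t_it_j=\binom{i+j}{i}t_{i+j}$, vanishing once $i+j>\tfrac{p-3}{2}$, while $w$ is the single residual exterior generator with $w^2=0$. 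Finally a count of total $Z_p$-dimension (it should come out to $p-1$) against the exact sequences confirms that there is nothing else.

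The step I expect to be the main obstacle is the last one: proving that the spliced Thom--Gysin and Gysin sequences degenerate exactly as claimed, i.e.\ that every connecting map and every potential differential beyond the forced ones vanishes. This is delicate because $G(S^{n},m-1)$ and $G(S^{n},m-2)$ already carry substantial cohomology, so a priori cup product with $e$, the fibre transfer, and the Thom pushforward could interact intricately; I would control them through naturality under the forgetting maps (compatibly with the inclusions $G(S^{n},m-2)\hookrightarrow G(S^{n},m)$) and through the free $G=Z_p$-action, which severely limits which monomials in the generators can appear in each degree. The same scheme should yield Theorem~\ref{conf-space-hom-even} for even $d$, where $n$ is odd and $\chi(S^{n})=0$: the linking Euler classes then vanish, the Gysin sequences split off free exterior summands, and one obtains the extra generator $u$ alongside the $s_i$.
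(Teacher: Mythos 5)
This statement is not proved in the paper at all: it is imported verbatim from Farber's work (Theorems 18 and 19 of \cite{far00}), the only added remark being about coefficients in the even-dimensional companion statement. So your proposal has to stand as an independent proof, and as it stands it has a genuine gap, located exactly where you yourself flag it: the inductive step is a program, not an argument.

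Concretely, three things are missing. First, the actual content of the theorem is multiplicative, and additive tools (the Gysin sequence of the sphere bundle $V\to G(S^{d-1},m-1)$ spliced with the Thom sequence of the collision stratum $Z\simeq G(S^{d-1},m-2)$) cannot by themselves produce the relations $t_it_j=\binom{i+j}{i}t_{i+j}$; ``standard divided-power bookkeeping'' is not a proof, and this is precisely the part of \cite{far00} that requires substantial extra work beyond the additive computation of \cite{fartab99}. Second, the mechanism you name as decisive for the parity of $d$ --- the Euler class $e=\chi(S^{d-1})\,\iota^*(g)$ --- only does work in the base case $m=3$: in the inductive step the class $e$ lives in $H^{d-1}(G(S^{d-1},m-1),Z_p)$, and by your own inductive formula this group vanishes for odd $d\ge 5$ (all degrees are sums of $2d-3$ and multiples of $2d-4$), so $e=0$ there regardless of parity; the parity information must therefore be carried by the connecting homomorphisms between the two spliced sequences, which are exactly the maps you leave uncomputed. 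Third, the induction cannot even be set up additively without the formula for even lengths, which you acknowledge is needed (``an auxiliary formula for even lengths has to be carried along'') but never state; for even $m$ the cohomology of $G(S^{d-1},m)$ is genuinely different, with extra classes, so the splice of the two sequences cannot be analyzed until that case is pinned down. In short, the skeleton (forgetting a point, stratifying by the collision locus, Thom--Gysin plus Gysin) is a reasonable reconstruction of the Farber--Tabachnikov approach to the additive structure, but the decisive steps --- the connecting maps, the even-length companion formula, and above all the ring relations --- are not supplied.
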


Theorem~\ref{conf-space-hom-even} is formulated in~\cite{far00} for the cohomology coefficients $\mathbb Q$. But the proof is valid without change for $Z_p$, since we only have to divide by $i!$, where $i\le p-2$.

Note that $G$ acts on the cohomology in these theorems trivially, since the cohomology is either $Z_p$ or $0$ in every dimension.

Finally, let us give another lower bound for $\hind G(S^{d-1}, p)$.
We have a $D_p$-equivariant map from the Stiefel variety of $2$-frames in $\mathbb R^d$ $V_d^2\to G(S^{d-1}, p)$, defined as follows. Take some regular $p$-gon in the plane. Every frame $(e_1, e_2)\in V_d^2$ gives an embedding of this $p$-gon to $S^{d-1}$, which is $D_p$-equivariant. So $\hind G(S^{d-1},p)\ge \hind V_d^2$, the latter index being equal to (see~\cite{mak89}) $2d-3$.

\begin{thm}
\label{conf-space-ind} If $d\ge 3$, then $\hind G(S^{d-1},p) = (d-2)(p-1) + 1$.
\end{thm}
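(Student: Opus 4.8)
The plan is to prove the two inequalities $\hind G(S^{d-1},p)\ge (d-2)(p-1)+1$ and $\hind G(S^{d-1},p)\le (d-2)(p-1)+1$ separately, treating the parity of $d$ separately in the upper bound since the cohomology descriptions in Theorems~\ref{conf-space-hom-even} and~\ref{conf-space-hom-odd} differ.

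For the lower bound, recall we already have $\hind G(S^{d-1},p)\ge (d-2)(p-1)$ from the embedding $G(\mathbb R^d,p)\subseteq G(S^{d-1},p)$ together with Theorem~\ref{Rd-conf-space-ind} and monotonicity of index, and we also have $\hind G(S^{d-1},p)\ge 2d-3$ from the Stiefel-variety map $V_d^2\to G(S^{d-1},p)$. When $d=3$ the second bound already gives $2d-3=3=(d-2)(p-1)+1$ only when $p=3$; in general the gap of $1$ that we must close is genuine, so the idea is to combine the two geometric sources of classes. Concretely, in the spectral sequence $E_2^{x,y}=H^x(G,H^y(G(S^{d-1},p),Z_p))$ computing $H_G^*(G(S^{d-1},p),Z_p)$, the class realizing index $(d-2)(p-1)$ coming from $G(\mathbb R^d,p)$ and the class of degree $2d-3$ coming from $V_d^2$ should have a nonzero product in $H_G^*$; the point is that these live in complementary ``directions'' (one built from the $s_i$'s, i.e.\ the top class $s_1\cdots s_{p-1}$ restricted from $\mathbb R^d$, the other from the extra generator $u$ or $w$ of Theorem~\ref{conf-space-hom-even}/\ref{conf-space-hom-odd}), so their product is the top nonzero class, of degree one higher than $(d-2)(p-1)$ in the relevant filtration, giving $\hind\ge (d-2)(p-1)+1$. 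I would extract this by running the spectral sequence exactly as in the proof of Theorem~\ref{Rd-conf-space-ind}: identify the top row and bottom row as $A_G$-modules, observe that the only possible nonzero differential is forced to be nonzero because the index is finite (the space is compact), and read off that the image in the bottom row is $I_G^{(d-2)(p-1)+2}$, not $I_G^{(d-2)(p-1)+1}$.

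For the upper bound, $G$ acts trivially on $H^*(G(S^{d-1},p),Z_p)$ (noted after Theorem~\ref{conf-space-hom-odd}, since the cohomology is $Z_p$ or $0$ in each degree), so the spectral sequence has $E_2=H^*(G(S^{d-1},p),Z_p)\otimes A_G$. The total dimension of the space: $G(S^{d-1},p)$ is an open subset of $(S^{d-1})^p$, hence a $p(d-1)$-manifold, but what matters is the top nonzero cohomological degree. From Theorem~\ref{conf-space-hom-even} (d even) the top class is $u\,s_{p-2}$ in degree $(d-1)+(p-2)(d-2)=(d-2)(p-1)+1$; from Theorem~\ref{conf-space-hom-odd} (d odd) the top class is $w\,t_{(p-3)/2}$ in degree $(2d-3)+\frac{p-3}{2}(2d-4)=(2d-3)+(p-3)(d-2)=(d-2)(p-1)+1$ as well. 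So in both cases $H^k(G(S^{d-1},p),Z_p)=0$ for $k>(d-2)(p-1)+1$. Since the edge map $H_G^n(\pt)\to H_G^n(X)$ factors through the bottom row $A_G$ of the spectral sequence, and the whole $E_\infty$ (hence $H_G^n(X)$) vanishes in total degree $n>(d-2)(p-1)+1+\dim(\text{top of }A_G\text{ surviving})$... more carefully: the fibre cohomology is concentrated in degrees $\le (d-2)(p-1)+1$, and one shows that the classes in that top degree are hit by differentials (or that the $0$-column contribution $A_G$ itself dies above degree $(d-2)(p-1)+1$), forcing $H_G^n(X)=0$, equivalently the edge map to be trivial, for $n>(d-2)(p-1)+1$. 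Hence $\hind G(S^{d-1},p)\le (d-2)(p-1)+1$.

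The main obstacle I expect is the lower bound, specifically verifying that the product of the index-$(d-2)(p-1)$ class (from $\mathbb R^d$) with the degree-$(2d-3)$ class (from $V_d^2$) is genuinely nonzero in $H_G^*(G(S^{d-1},p),Z_p)$ rather than killed in the spectral sequence; this requires tracking how the restriction map $H^*(G(S^{d-1},p))\to H^*(G(\mathbb R^d,p))$ interacts with the generators $u,w$ and the $s_i$, and checking that the differential pattern in the spectral sequence leaves exactly one more class alive than in the $\mathbb R^d$ case. A cleaner route may be to argue directly on $H_G^*$: show $\hind$ is additive-enough under the two maps by exhibiting an explicit nonzero class of the form $y^{((d-2)(p-1)+1)/2}$ (or $v$ times a power of $y$) in the image of $H_G^*(\pt)$, using that both contributing classes are detected and that their product survives because the target algebra $H_G^*(G(S^{d-1},p),Z_p)$ is nonzero in that degree by the upper-bound computation. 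The parity bookkeeping and the $d=3$ edge case (where $V_d^2$ already contributes degree $2d-3=3$) should be checked but are routine once the structure is set up.
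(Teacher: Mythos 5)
Your sketch contains two genuine gaps, one in each inequality. For the lower bound, the idea that the class detected on $G(\mathbb R^{d-1},p)$ (note: the subspace of $G(S^{d-1},p)$ is $G(\mathbb R^{d-1},p)$, not $G(\mathbb R^d,p)$; Theorem~\ref{Rd-conf-space-ind} applied in dimension $d-1$ is what gives $(d-2)(p-1)$) and the class of degree $2d-3$ detected on $V_d^2$ ``should have a nonzero product'' is not an argument: the index concerns the image of $A_G$ under restriction, the product of two elements of that image is again a restriction from $A_G$ but of degree equal to the \emph{sum} of the degrees (which is far above the dimension bound, so it is zero), and nonvanishing of two restrictions detected on two different subspaces never implies nonvanishing of their product -- that nonvanishing in degree $(d-2)(p-1)+1$ is exactly what has to be proved. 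Your fallback, ``run the spectral sequence exactly as in Theorem~\ref{Rd-conf-space-ind},'' does not transfer: there the intermediate cohomology of the fiber consisted of free $Z_p[G]$-modules, so only the top and bottom rows of $E_2$ survived and there was a unique possible differential; here $G$ acts trivially on $H^*(G(S^{d-1},p),Z_p)$, so $E_2=H^*(G(S^{d-1},p),Z_p)\otimes A_G$ has many full rows and many a priori possible differentials. The actual content of the proof is the analysis you defer: using that the ring is generated by $(u,s_1)$ (or $(w,t_1)$ for odd $d$), one shows the first nontrivial differential must be $d_2(u)=as_1y$ -- the alternatives, namely $u$, $s_1$ or $w$ transgressing to the bottom row, are excluded because they would force $\hind\le d-1$ or $2d-3$, contradicting the bound $(d-2)(p-1)$ when $p\ge5$ (and for $p=3$ the Stiefel bound $2d-3=(d-2)(p-1)+1$ already finishes the lower bound) -- then one computes $E_3$, whose bottom row is $A_G$ and whose top full row is generated by $z=us_1^{p-2}$, and observes that only $z$ can ever kill the bottom row, which it must do (finiteness of the index for a free action on a finite-dimensional space) via $d_{(d-2)(p-1)+2}$; this is what yields survival of the bottom row exactly through degree $(d-2)(p-1)+1$.

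For the upper bound, your degree computation of the top fiber class ($us_{p-2}$, resp.\ $wt_{(p-3)/2}$, both of degree $(d-2)(p-1)+1$) is correct, but the inference from $H^k(G(S^{d-1},p),Z_p)=0$ for $k>(d-2)(p-1)+1$ to $\hind\le (d-2)(p-1)+1$ is invalid: the $E_2$-page is nonzero in arbitrarily high total degree in every case, and ``the classes are hit by differentials'' is precisely the statement in question, not something that follows from vanishing of fiber cohomology. (A free $G$-space with the cohomology of a point, such as $S^\infty=EG$, has infinite index, so no argument using only the fiber cohomology and freeness in this loose sense can work.) The paper's upper bound comes from a different input you never invoke: by Morse theory (cited from Babenko and Farber--Tabachnikov), $G(S^{d-1},p)$ is homotopy equivalent to a CW-complex of dimension $(d-2)(p-1)+1$, and since the $G$-action is free, $H_G^*(X,Z_p)=H^*(X/G,Z_p)$, so the index is bounded by this homotopy dimension. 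Without that Morse-theoretic dimension bound (or, alternatively, without running the differential analysis above to completion, which also shows the bottom row dies at degree $(d-2)(p-1)+2$), the upper bound is not established.
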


\begin{proof}
First, as it was mentioned in~\cite{bab90,fartab99}, the Morse theory shows that $G(S^{d-1}, p)$ is homotopy equivalent to a CW-complex of dimension $(d-2)(p-1)+1$. So the index is naturally bounded from above and we prove the lower bound only.

If $p=3$ $(p-1)(d-2)+1 = 2d - 3$ and the lower bound is already done. So we consider the case $p\ge 5$.

Note that $H^*(G(S^{d-1},p), Z_p)$ is multiplicatively generated by $(u, s_1)$ and $(w, t_1)$ for even and odd $d$ respectively.

First consider the case of even $d$. 

Consider the spectral sequence with $E_2=H^*(G(S^{d-1},p), Z_p)\otimes A_G$. In this case the multiplicative generators of $H^*(G(S^{d-1},p), Z_p)$ have dimension at most $d-1$. If the differentials $d_m$ were trivial for $m\le d$, then they should be trivial for $m>d$ from the dimension of generators. But in this case $\hind G(S^{d-1},p)=+\infty$, which is false. So some of $d_m$ is nontrivial for $m\le d$.

Note that in $E_2$ the bottom row is $A_G$, the next from bottom is $s_1A_G$, then $uA_G$. Let $d_m$ be the first nontrivial differential. The generators $u$ and $s_1$ cannot be mapped non-trivially to the bottom row, since in this case $\hind G(S^{d-1},p)$ would be at most $d-1$. The only possibility for nontrivial $d_m$ is $d_2$ that has $d_2(u) = as_1y$ ($a\in Z_p^*$). In this case $E_3$ will be multiplicatively generated by $v, y\in A_G$, the image of $s_1$ and of $us^{p-2}$, denote the latter by $z$. The relations would be (besides those of $A_G$):
$$
s_1^{p-1} = 0,\quad s_1y = 0,\quad z^2 = 0.
$$
That means that $E_3$ has two full rows in the top and in the bottom, and several rows in between contain $s_1^k$ and $s_1^kv$ for $k\in [p-2]$.

The next $d_m$ cannot map $z$ non-trivially if $m\le (p-1)(d-2) + 1 = \dim z$. They also cannot map $s_1$ non-trivially from the dimension considerations and the lower bound for index. The latter statement can also be deduced from the fact that all differentials are homomorphisms of $A_G$-modules. So finally $d_{(p-1)(d-2)+2}$ has to map $z$ to the bottom row and we have $\hind G(S^{d-1},p) = (p-1)(d-2)+1$.

Now consider the case of odd $d$. Everything is quite the same, but besides the alternative that gives $\hind G(S^{d-1},p) = (p-1)(d-2)+1$ we have another alternative. It could happen that $d_2$ is trivial and the first nontrivial $d_{2d-2}$ maps $w$ non-trivially to the bottom row. But in this case $\hind G(S^{d-1},p) = 2d-3$, which is less then the bound $(d-2)(p-1)$ for $p\ge 5$.
\end{proof}

\section{Proof of Theorem~\ref{bill-p}}

Consider the function on $G(\partial T, p)$
$$
f: (x_1,\ldots, x_p) \mapsto \sum_{i=1}^p |x_ix_{i+1}|.
$$
If $\dfrac{\partial f}{\partial x_i} = 0$, then at $x_i$ the polyline $x_1x_2\ldots x_px_1$ reflects by the elastic reflection rule. $f$ can be considered as a function on $G(\partial T, p)/D_p$, its critical points being in one-to-one correspondence with distinct periodic billiard trajectories of length $p$. 

If $T$ is not strictly convex, some line segments of the trajectory can lie on $\partial T$. But in this case all the segments must lie on the same line, which is impossible.

Following~\cite{fartab99} we consider $G_\varepsilon(\partial T, p)$ instead of $G(\partial T, p)$ and apply Theorem~\ref{ls-crit} to $f$.

Then we estimate $\cat G(\partial T, p)/D_p\ge (d-2)(p-1)+2$ by Theorem~\ref{conf-space-ind} and Lemma~\ref{ls-ind}.

\section{Conclusion}

For an arbitrary $d$-manifold $M$ ($d\ge 2$) $\hind G(M, p)$ is between $(d-1)(p-1)$ (Theorem~\ref{Rd-conf-space-ind}) and $(d-1)(p-1)+1$ (by the dimension considerations). It seems probable that for closed $M$ $\hind G(M, p) = (d-1)(p-1)+1$, as it is for spheres (Theorem~\ref{conf-space-ind}).

The author would like to thank A. Yu. Volovikov for useful discussions on the subject of the paper.

\end{document}